\newcounter{cptRef}
\newcounter{cptTh}
\newtheorem{theorem}[cptTh]{Theorem}
\newtheorem{property}[cptTh]{Property}
\newtheorem{corollary}[cptTh]{Corollary}
\newtheorem{conjecture}[cptTh]{Conjecture}
\newtheorem{definition}[cptTh]{Definition}
\newtheorem{observation}[cptTh]{Observation}
\def\H{{\mathcal H}} 
\def\Z{{\mathbb Z}}
\def\set#1{\left\{#1\right\}}
\def\abs#1{\left|#1\right|}
\def\Cay{{\rm Cay}}
\def\paragraph#1{\par{\bf #1} \ignorespaces}
\begin{document}

\title{Homomorphisms of Cayley graphs\penalty-10{} and Cycle Double Covers}

\author{%
  Radek Hu\v{s}ek\and
  Robert \v{S}\'{a}mal\thanks{
Both authors are members of Computer Science Institute of Charles University,
Prague, Czech Republic.
Both authors were partially supported by GA\v{C}R grant 16-19910S.
The first author was partially supported by the Charles University,
project GA UK No.~1726218.
Email: {\tt \{husek,samal\}@iuuk.mff.cuni.cz}
}}


\date{}

\maketitle

\begin{abstract}
  We study the following conjecture of Matt DeVos: If there is a graph homomorphism
  from Cayley graph $\Cay(M, B)$ to another Cayley graph $\Cay(M', B')$ then
  every graph with an $(M,B)$-flow has an $(M',B')$-flow. This conjecture was originally
  motivated by the flow-tension duality. We show that a natural strengthening of this
  conjecture does not hold in all cases but we conjecture that it still holds
  for an interesting subclass of them and we prove a partial result in this direction.
  We also show that the original conjecture implies the existence of an oriented cycle
  double cover with a small number of cycles.
\end{abstract}

\section{Introduction}
For an abelian group $M$ (all groups in this article are abelian even though we often
omit the word abelian), an \emph{$M$-flow $\varphi$} on a directed graph $G = (V, E)$
is a mapping $E \to M$ such that the oriented sum around every vertex $v$ is zero:
$$ 
  \sum_{vw \in E} \varphi(vw) - \sum_{uv \in E} \varphi(uv) = 0.
$$
We say that $M$-flow $\varphi$ is an \emph{$(M, B)$-flow} if $\varphi(e) \in B$ for all
$e \in E$ (we always assume that $B \subseteq M$ and that $B$ is symmetric,
i.\,e.\ $B = -B$).

An \emph{$M$-tension $\tau$} is again a mapping $E \to M$ but the condition is that 
the oriented sum along every cycle $C$ is zero, explicitly  
$$ 
  \sum_{e \in C^+} \tau(e) - \sum_{e \in C^-} \tau(e) = 0
$$
where $C^+$ are edges of $C$ with one orientation along the cycle and $C^-$ the edges with the opposite orientation.
We define \emph{$(M, B)$-tension} to be an $M$-tension which uses only values from a symmetric
set $B \subseteq M$.

For planar graphs, flows and tensions are dual notions -- every flow in primal graph
induces a tension in its dual and vice versa. A tension can be equivalently described
by a mapping $p\colon V \to M$ (usually called a group coloring or a potential).
The value $\tau(uv)$ is defined as $p(v) - p(u)$, we write $\tau = \delta p$.
Note that $p$ is nothing more than a homomorphism into Cayley graph of $M$.
Because a composition of homomorphisms is a homomorphism, the following statement
holds:

\begin{observation} \label{obs:tensions}
Let $M$, $M'$ be abelian groups and $B \subset M$,
$B' \subset M'$ their symmetric subsets.
If there is a graph homomorphism from $\Cay(M, B)$ into $\Cay(M', B')$, then
every graph with an $(M,B)$-tension has an $(M',B')$-tension.
\end{observation}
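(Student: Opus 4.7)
The plan is to follow precisely the composition idea already hinted at in the paragraph before the observation. Fix a graph $G$ equipped with an $(M,B)$-tension $\tau$. The first step is to recover a potential $p\colon V(G)\to M$ with $\tau=\delta p$. Working component by component, I pick a root vertex $v_0$ in each component, set $p(v_0)=0$, and for any other vertex $v$ define $p(v)$ as the signed sum of $\tau$ along any $v_0$-$v$ walk. Well-definedness is exactly the cycle condition in the definition of an $M$-tension: any two walks between the same endpoints differ by a closed walk, whose signed sum decomposes into sums along cycles, each of which vanishes by hypothesis.

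Next I observe that the map $p$ is in fact a graph homomorphism from $G$ to $\Cay(M,B)$. Indeed, for every edge $uv \in E(G)$ we have $p(v)-p(u)=\tau(uv)\in B$, so by the defining property of Cayley graphs $p(u)p(v)$ is an edge of $\Cay(M,B)$ (this uses that $B=-B$, so the check does not depend on the chosen orientation). Now compose with the assumed homomorphism $f\colon \Cay(M,B)\to\Cay(M',B')$ to obtain $q:=f\circ p\colon V(G)\to M'$, which is still a graph homomorphism into $\Cay(M',B')$.

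Finally, define $\tau'\colon E(G)\to M'$ by $\tau'(uv)=q(v)-q(u)=\delta q(uv)$. Because $\tau'$ is a coboundary, the signed sum around any cycle telescopes to $0$, so $\tau'$ is automatically an $M'$-tension. And because $q$ is a homomorphism into $\Cay(M',B')$, for every edge $uv$ the difference $q(v)-q(u)$ lies in $B'$, so $\tau'$ is an $(M',B')$-tension, as required.

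I do not expect any real obstacle: the whole content is repackaging a tension as a homomorphism to a Cayley graph, composing, and repackaging the result back as a tension. The only small point to be careful about is the choice of potential when $G$ has several connected components (handled by rooting each component separately) and the consistent handling of edge orientations, both of which are taken care of by the symmetry of $B$ and $B'$.
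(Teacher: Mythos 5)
Your proof is correct and follows exactly the route the paper intends: a tension is recovered as a coboundary $\delta p$ of a potential, the potential is a homomorphism into $\Cay(M,B)$, and composing with the given homomorphism into $\Cay(M',B')$ yields the desired $(M',B')$-tension. The paper leaves this as an immediate consequence of the preceding paragraph (``a composition of homomorphisms is a homomorphism''); you have merely written out the same argument in full detail, including the component-by-component construction of the potential.
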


Many questions about flows on graphs were motivated by mimicking the properties of coloring  
in the dual setting~\cite{Tutte}. In the same spirit, we ask for the dual version of
Observation~\ref{obs:tensions}: 

\begin{conjecture}[DeVos \cite{ConDeVos}] \label{con:devos}
Let $M$, $M'$ be abelian groups and $B \subset M$,
$B' \subset M'$ their symmetric subsets.
If there is a graph homomorphism from $\Cay(M, B)$ into $\Cay(M', B')$, then
every graph with an $(M,B)$-flow has an $(M',B')$-flow.
\end{conjecture}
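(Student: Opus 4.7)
The natural strategy is to imitate the proof of Observation~\ref{obs:tensions}, where a tension $\tau = \delta p$ was transformed by replacing its potential $p\colon V \to M$ with $h\circ p$. Since flows carry no such global potential, I would instead try to produce $\varphi'$ along closed walks: for any directed closed walk $W = v_0 e_1 v_1 \cdots e_k v_0$ in $G$, the partial sums $s_j = \sum_{i \le j} \epsilon_i\varphi(e_i)$, with $\epsilon_i \in \{\pm 1\}$ recording the orientation of $e_i$ along $W$, satisfy $s_0 = s_k = 0$ by Kirchhoff's law, and $s_j - s_{j-1} \in B$, so $s_0, s_1, \ldots, s_k$ is a closed walk in $\Cay(M,B)$. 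Applying $h$ yields a closed walk $h(s_0), \ldots, h(s_k)$ in $\Cay(M',B')$, suggesting the definition $\varphi'(e_j) := \epsilon_j\bigl(h(s_j) - h(s_{j-1})\bigr) \in B'$. To turn this into a construction, I would fix a spanning tree $T$ of $G$, run this procedure along each fundamental cycle $C_e$ associated to a non-tree edge $e$, and hope that the values prescribed on shared tree edges coincide; if they do, the closed-walk consistency of the values is exactly Kirchhoff's law for $\varphi'$, and we are done.

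The hard step — and the reason the statement is a conjecture — is precisely this consistency. Because $h$ is only a map of sets, not a group homomorphism in general, the difference $h(s_j) - h(s_{j-1})$ depends on the actual walk chosen and not just on the edge $e_j$, and two fundamental cycles meeting along a shared tree path will typically prescribe incompatible values on that path. An alternative is to decompose $\varphi$ over an integral cycle basis as $\sum_i \alpha_i \chi_{C_i}$ with $\alpha_i \in M$ and transform each summand separately, but the coefficients $\alpha_i$ need not lie in $B$, so the individual summands need not be $(M,B)$-flows, which removes the base case of any natural inductive reduction.

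A realistic plan is therefore to attack tractable special cases and hope the techniques generalize. When $h$ is, up to translation, a genuine group homomorphism $M \to M'$, the assignment $\varphi'(e) := h(\varphi(e)) - h(0)$ works immediately; the first genuinely interesting regime is when $h$ deviates from a group homomorphism only in a controlled way, for instance when it factors $\Cay(M,B) \to \Cay(M'',B'') \to \Cay(M',B')$ through Cayley graphs that \emph{do} admit group-homomorphism lifts. Structural hypotheses on $G$ give further handles — small circuit rank, high edge-connectivity forcing a rigid cycle decomposition, or planarity (where flow--tension duality reduces the problem to Observation~\ref{obs:tensions}). In every case the central obstacle is the same: upgrading the purely combinatorial Cayley-graph homomorphism $h$ to a map that respects the additive structure of the cycle space of $G$, and I would expect any genuine progress on the full conjecture to begin precisely by identifying the ``obstruction class'' measuring the failure of $h$ to be a group homomorphism and showing that it vanishes against every flow.
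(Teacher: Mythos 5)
The statement you were asked about is an open conjecture (attributed to DeVos), and the paper contains no proof of it --- it only records special cases in which it is known to hold and then builds a framework (the homomorphic-image sets $\H_m$, the reformulation as Conjecture~\ref{con:devos_ref}, and the Strong Homomorphism Property) for attacking it. So your text, which openly stops short of a proof, cannot be measured against a proof in the paper; the honest verdict is that you have correctly diagnosed why the tension argument of Observation~\ref{obs:tensions} does not dualize: a flow has no global potential, the walk-based partial sums $s_j$ depend on the walk and not just on the edge, and a set map $h$ gives no canonical, consistent way to assign $h(s_j)-h(s_{j-1})$ to a shared tree edge. That is exactly the obstruction that makes this a conjecture rather than a theorem, and your observation that the case of a genuine group homomorphism (up to translation) and the planar case (via flow--tension duality) go through matches two of the special cases the paper lists.

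Where your proposal diverges from the paper's actual treatment is in the remaining known cases and in the strategy for partial progress. The paper also notes the trivial case $0 \in B'$, the case $B = M\setminus\set{0}$, $B' = M'\setminus\set{0}$ (monotonicity of nowhere-zero flows in the group size), and the circular-flow case $M=\Z_{2n+1}$, $B=\set{n,n+1}$, which is handled not by repairing the walk construction but by converting the $(M,B)$-flow into an integer flow with values in $\set{\pm n,\pm(n+1)}$ (Tutte) and invoking the Goddyn--Tarsi--Zhang monotonicity of fractional flows --- a route your cycle-basis discussion does not reach, precisely because it avoids decomposing $\varphi$ and instead re-encodes it. More importantly, the paper's way of ``identifying the obstruction'' is different from your proposed obstruction-class viewpoint: it fixes the flow $\varphi$ and asks for $\varphi'(e) \in \H_m(\varphi(e))$ edge by edge (SHP), which permits reduction to cubic graphs, yields proofs for groups of order at most $4$ via $3$-cycle-double-cover constructions, but is then shown to \emph{fail} in general ($K_{3,3}$ with a $\Z_5$-flow and the universal mapping). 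So if you pursue your plan, be aware that the most natural edgewise strengthening of the conjecture is false, and any successful argument must exploit freedom that SHP forbids.
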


This is still an open problem but it holds in some special cases (the first three
appear in \cite{ConDeVos}, the last one is probably new).
\begin{itemize}
  \item If $G$ is planar (because of duality and Observation~\ref{obs:tensions}).
  \item If $0 \in B'$ (every graph has an $(M', \set{0})$-flow).
  \item If $B = M \setminus \set{0}$ and $B' = M' \setminus \set{0}$:
    Here an $(M,B)$-flow is just a nowhere-zero $M$-flow. It is known that
    the existence of a nowhere-zero flow is monotone in the size of the group \cite{Tutte}.
  \item If $M = \Z_{2n+1}$, $B = \set{n, n+1}$, $M'=\Z_{2n-1}$, $B'= \set{n-1,n}$:
    In this case $\Cay(M,B) \cong C_{2n+1}$ has a graph homomorphism to $\Cay(M',B') \cong C_{2n-1}$,
    we will show how to transform an $(M,B)$-flow into an $(M',B')$.
    Let $f$ be an $(M,B)$-flow on a graph~$G$. 
    It is known~\cite{Tutte2} that $G$ also has an integer flow $f'$ such that for every edge~$e$ we have 
    $|f'(e)| < 2n+1$ and $f(e) \equiv f'(e) \pmod{2n+1}$. This means that $f'(e) \in \{\pm n, \pm (n+1)\}$, in other words 
    $f'$ is a nowhere-zero fractional $\frac{2n+1}{n}$-flow. In the other direction it is easy to see that a graph with a 
    fractional $\frac{2n+1}{n}$-flow also has an $(M,B)$-flow. It is known~\cite{GoddynTarsiZhang} that 
    the existence of a nowhere-zero fractional $\frac{2n+1}{n}$-flow implies 
    the existence of a nowhere-zero fractional $\frac{2n-1}{n-1}$-flow. 
\end{itemize}

\section{New Framework}

The structure of homomorphism from $\Cay(M, B)$ to $\Cay(M', B')$ is hard
to describe. Instead we take any mapping $m\colon M \to M'$ (not necessarily a group
homomorphism)
and let $B'$ be determined by $m$ (so $B'$ is the minimal set for which
$m$ is a graph homomorphism). This is achieved by the following technical definition:

\begin{definition}
Let $M, M'$ be abelian groups and $m\colon M \to M'$ any mapping. For $x \in M$
we define its {\em homomorphic image}
$$ \H_m(x) := \set{ m(a + x) - m(a) : a \in M } . $$
\end{definition}

We omit the index $m$ whenever possible.
Observe that in the case of tensions $\H(x)$ is exactly the set of possible
images of value $x$ on some edge after composing original tension
represented by a group coloring with $m$:

\begin{observation}\label{obs:tension_shp}
  Let $p\colon V \to M$ be a group coloring and
  let $m\colon M \to M'$ be any mapping between abelian groups $M$ and $M'$.
  Define $p' = m \circ p$, $\tau = \delta p$, and $\tau' = \delta p'$.
  Then
  \begin{itemize}
    \item $\tau$ is a $M$-tension, 
    \item $\tau'$ is a $M'$-tension, and
    \item $\forall e \in E : \tau'(e) \in \H(\tau(e)).$
  \end{itemize}
\end{observation}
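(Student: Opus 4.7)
The proof should be essentially a direct unpacking of definitions, so my plan is to verify each of the three bullet points in turn, with the bulk of the work being on the third.

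For the first two bullets, I would note that both $\tau = \delta p$ and $\tau' = \delta p'$ are by construction coboundaries of potentials $p\colon V \to M$ and $p' = m \circ p\colon V \to M'$. Given any cycle $C$ in $G$, the oriented sum $\sum_{e \in C^+} \delta p(e) - \sum_{e \in C^-} \delta p(e)$ telescopes: each vertex of $C$ contributes $p(v)$ with a $+$ sign from one incident edge of $C$ and with a $-$ sign from the other incident edge of $C$, so the sum vanishes. The same argument applied to $p'$ shows that $\tau'$ is an $M'$-tension.

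For the third bullet, I would fix an edge $e = uv \in E$ (oriented from $u$ to $v$) and compute directly. By the definition of $\delta$, we have $\tau(e) = p(v) - p(u)$ and
\[
  \tau'(e) = p'(v) - p'(u) = m(p(v)) - m(p(u)).
\]
Setting $a := p(u) \in M$ and $x := \tau(e) = p(v) - p(u) \in M$, we have $p(v) = a + x$, so
\[
  \tau'(e) = m(a + x) - m(a),
\]
which lies in $\H_m(x) = \H_m(\tau(e))$ by the defining formula of $\H_m$.

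There is no real obstacle here; the only subtle point is keeping the edge orientation consistent between the definitions of $\tau$, $\tau'$, and $\H_m$, and noting that the choice $a = p(u)$ is a specific witness, so membership in $\H_m(\tau(e))$ (rather than equality to a fixed element) is exactly the right form of the conclusion. No hypothesis on $m$ being a group homomorphism is used, in line with the remark preceding the definition.
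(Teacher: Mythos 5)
Your proof is correct and is exactly the definitional unpacking the paper intends: the paper states this as an observation without a written proof, its surrounding remark ("$\H(x)$ is exactly the set of possible images\dots") pointing to precisely your computation $\tau'(e) = m(a+x) - m(a)$ with $a = p(u)$, $x = \tau(e)$, together with the standard telescoping argument that any coboundary $\delta p$ is a tension.
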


\begin{conjecture}[Reformulation of Conjecture~\ref{con:devos}]\label{con:devos_ref}
Let $M$ and $M'$ be abelian groups and $B$ a symmetric subset of $M$.
For any mapping $m\colon M \to M'$
every graph with an $(M,B)$-flow has an $(M', \bigcup_{x \in B} \H(x))$-flow.
\end{conjecture}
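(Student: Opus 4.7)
The statement is labelled a reformulation of Conjecture~\ref{con:devos}, so my plan is to prove equivalence of the two conjectures, establishing each implication separately.

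For the direction from Conjecture~\ref{con:devos} to Conjecture~\ref{con:devos_ref}, I would fix any mapping $m\colon M \to M'$, set $B' := \bigcup_{x \in B} \H_m(x)$, and verify that $m$ itself realises a graph homomorphism $\Cay(M, B) \to \Cay(M', B')$. If $uv$ is an edge of $\Cay(M, B)$, then $v - u \in B$, and $m(v) - m(u) = m(u + (v - u)) - m(u) \in \H_m(v - u) \subseteq B'$, so the edge condition is immediate from the definition of $\H$. Before invoking Conjecture~\ref{con:devos}, I must check that $B'$ is symmetric; this reduces to the identity $\H_m(-x) = -\H_m(x)$, which follows from the substitution $a = b + x$ in the defining set of $\H_m(-x)$, combined with the symmetry of $B$.

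For the reverse direction, I would take a graph homomorphism $h\colon \Cay(M, B) \to \Cay(M', B')$ guaranteed by the hypothesis of Conjecture~\ref{con:devos} and apply Conjecture~\ref{con:devos_ref} with $m := h$. For each $x \in B$ and each $a \in M$, the vertices $a$ and $a + x$ are adjacent in $\Cay(M, B)$, so the homomorphism property gives $h(a + x) - h(a) \in B'$. Hence $\H_h(x) \subseteq B'$ for every $x \in B$, so $\bigcup_{x \in B} \H_h(x) \subseteq B'$, and any $(M', \bigcup_{x \in B} \H_h(x))$-flow supplied by Conjecture~\ref{con:devos_ref} is automatically an $(M', B')$-flow.

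The argument is essentially bookkeeping; the only mildly technical point is the symmetry check for $B'$ in the first direction, which is a one-line substitution, so I do not expect a genuine obstacle here — the real content of the paper is in attempting Conjecture~\ref{con:devos_ref} itself, for which this equivalence is a convenient starting point.
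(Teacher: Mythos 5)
Your argument is correct; the paper states this reformulation without an explicit proof, and your two implications are exactly the routine verification it leaves implicit (the identity $\H_m(-x)=-\H_m(x)$ you use for symmetry is the same fact the paper invokes inside the proof of Observation~\ref{obs:shp_red}). The only point worth a word is the degenerate case where $m$ identifies the endpoints of an edge of $\Cay(M,B)$: then $0\in B'=\bigcup_{x\in B}\H_m(x)$, so the conclusion of Conjecture~\ref{con:devos_ref} holds trivially via the zero flow and the simple-graph homomorphism requirement causes no trouble.
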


The traditional approach to solving flow-related conjectures is to study properties
of hypothetical minimal counterexample. Usually the problem is reduced to
cubic graphs by splitting\,/\,decontracting vertices. This, however, is not
possible with Conjecture~\ref{con:devos} because decontracting a vertex may
create an edge with a new value found nowhere else, modifying $B$. To overcome
this we formulated the following property:

\begin{property}[Strong homomorphism property] \label{con:husek}
Let $G$ be a (directed) graph and $M$ an abelian group. We say that $G$ has
{\rm strong homomorphism property} (SHP) for $M$ if for every mapping $m\colon M \to M'$
(where $M'$ is any abelian group) and every $M$-flow $\varphi$ there exists
a $M'$-flow $\varphi'$ such that $\varphi'(e) \in \H(\varphi(e))$
for all edges $e$. We say that $G$ has SHP if it has SHP for all abelian groups.
\end{property}

Note that SHP allows the flow to be zero on some edges but such edges
are not interesting because $\H(0)$ is always $\set{0}$.
The SHP allows us to study only cubic graphs -- we can make any graph
(sub)cubic by decontracting its vertices of high degree and if SHP holds for such
a decontracted graphs then it holds for the original graph too. To state this
in a formal way, we need the following technical definition:

\begin{definition}\label{def:decontraction}
We say that digraph a $H$ is a {\em cubification}
of digraph $G$ if
$H$ can be obtained from $G$ using following operations:
\begin{enumerate}
  \item decontraction of vertex of degree at least 4 (such that both new
    vertices have degree at least 3),
  \item suppression of a vertex of degree 2,
  \item deletion of a bridge,
  \item deletion of a loop, and
  \item deletion of an isolated vertex.
\end{enumerate}
\end{definition}

With this definition we want to show that every non-cubic graph can be reduced
to a smaller cubic one. To get this we need to use a slightly non-standard definition
of the size of the graph which considers graphs with larger degrees bigger.
Suitable definition for us is $$ \Phi := \sum_{v \in V} 3^{\deg v}.$$

\begin{observation}[Reducibility of SHP to cubic graphs]\label{obs:shp_red}
Let $M$ be an abelian group and let $G$ be a digraph.
If some cubification of $G$ has SHP for $M$, then also $G$ has SHP for $M$.
Moreover for every flow $\varphi: E \to M$ there exists a non-strictly smaller
(possibly empty)
cubic graph $G'$ and a nowhere-zero flow $\varphi': E' \to M$ such that
if SHP does not hold for $\varphi$ on $G$ then SHP also does not hold
for $\varphi'$ on $G'$.
\end{observation}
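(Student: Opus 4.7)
The plan is to verify, for each of the five operations in Definition~\ref{def:decontraction}, that applying it to $G$ preserves failure of SHP, and that the potential $\Phi$ strictly decreases. Iterating then terminates in a cubic graph: if some cubification $H$ has SHP for $M$ but a flow $\varphi$ on $G$ witnesses failure, following the sequence of operations from $G$ to $H$ would produce a corresponding flow on $H$ that still witnesses failure, contradicting the assumption on $H$. For the moreover part I would enlarge the operation set with a sixth operation, ``delete any edge $e$ with $\varphi(e) = 0$'', which preserves failure of SHP by the same reasoning as bridge deletion (since $\H(0) = \{0\}$) and also decreases $\Phi$; at termination the flow is then nowhere-zero, and since every op strictly decreases $\Phi$, the resulting $G'$ is non-strictly smaller than $G$.

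For the SHP-preservation I would argue contrapositively: assuming a putative $m$-witness $\psi_1$ on the smaller graph $G_1$, I build a witness $\psi$ on $G$ using the same $m$. For operation~1, if we decontract $v$ into $v_1, v_2$ and define the flow on the new edge $f = v_1 v_2$ as the signed imbalance at $v_1$, the extended $\varphi_1$ is an $M$-flow on $G_1$ (the two sides' imbalances are negatives of each other because $\varphi$ is a flow at $v$). Then $\psi$ is simply the restriction of $\psi_1$ to $E(G)$: conservation at $v$ in $G$ equals the sum of conservations at $v_1, v_2$ in $G_1$ (the $f$-contributions cancel), and $\psi(e) = \psi_1(e) \in \H(\varphi_1(e)) = \H(\varphi(e))$. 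For operation~2, conservation at a degree-$2$ vertex forces its two incident edges to have flows equal up to sign, and we lift $\psi_1$ to the two sub-edges using the identity $\H(-x) = -\H(x)$ so that both conservation and $\H$-membership hold. Operations~3 (bridge, flow $0$, $\H(0) = \{0\}$), 4 (loop irrelevant for conservation, $\H(x)$ always nonempty), 5 (isolated vertex), and the zero-edge deletion are immediate. The main subtlety is the orientation bookkeeping in operation~1.

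For termination, the potential change under each operation is a routine calculation: operations~2--5 and the zero-edge deletion strictly decrease $\Phi$ trivially. The only non-obvious case is operation~1, where I need $3^{d_1} + 3^{d_2} < 3^d$ subject to $d_1 + d_2 = d + 2$ and $d_1, d_2 \geq 3$; the maximum of the left side under these constraints is attained at $\{d_1, d_2\} = \{3, d-1\}$, giving $27 + 3^{d-1} < 3 \cdot 3^{d-1}$, which holds precisely when $d \geq 4$ --- the precondition of operation~1. Since $\Phi$ is a positive integer, the process terminates. At termination no vertex has degree $0$ (ruled out by operation~5), $1$ (its unique edge is a bridge, removed by~3 and then~5), $2$ (operation~2), or $\geq 4$ (operation~1); there are no loops, no bridges, and in the extended system no zero-flow edges. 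So $G'$ is cubic with a nowhere-zero flow, as required.
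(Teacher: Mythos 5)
Your proposal is correct and follows essentially the same route as the paper: the same per-operation arguments (unique extension of the flow under decontraction, $\H(-x)=-\H(x)$ for the degree-2 case, $\H(0)=\set{0}$ for bridges and zero-flow edges, nonemptiness of $\H$ for loops) and the same potential $\Phi=\sum_v 3^{\deg v}$, just phrased contrapositively via failure witnesses instead of transferring SHP upward. You additionally spell out the $3^{d_1}+3^{d_2}<3^d$ computation and the terminal-state degree analysis, which the paper only asserts.
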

\begin{proof}
To prove the first part, we only need to show that inverse of each operation
used in Definition~\ref{def:decontraction} does not break SHP:
\begin{enumerate}
  \item Suppose $G = G_1/e$ and $G_1$ has SHP for~$M$, let $m: M \to M'$ be any mapping. 
    Let $\varphi$ be an $M$-flow on~$G$. 
    There is a unique extension of~$\varphi$ to $G_1$, we use $\varphi$ for this extension as well. 
    (Note that the value of $\varphi(e)$ may be $0$.) As $G_1$ has SHP for $M$, there is an $M'$-flow 
    $\varphi'$ on~$G_1$ such that $\varphi'(e) \in \H_m(\varphi(e))$. The restriction of~$\varphi'$ to~$G = G_1/e$ 
    is the desired $M'$-flow on~$G$. 
  \item Subdivision of an edge is obvious when the new vertex of degree 2
    has both in-degree and out-degree 1. In the other case
    SHP still holds because $\H(-x) = -\H(x)$.
  \item Addition of a bridge does not break SHP because flow on a bridge
    is always 0 and $\H(0) = \set{0}$.
  \item Addition of a loop is also simple because $\H(x)$ is always non-empty
    and we can assign any value on a loop without affecting the rest of the flow.
  \item Addition of an isolated vertex does not change the flow at all.
\end{enumerate}

The moreover part:
Note that $\Phi = \sum_{v \in V} 3^{\deg v}$ for every cubification is strictly
smaller than $\Phi$ of the original graph.
To obtain $G'$ we set $G' = G$, $\varphi' = \varphi$, and
apply following operations as long as possible:
\begin{enumerate}
  \item Remove an edge $e' \in E'$ such that $\varphi'(e') = 0$.
  \item Apply some cubification operation on $G'$.
\end{enumerate}
Because each of the operations decreases $\Phi(G')$, the process terminates.
If the resulting $\varphi'$ was not nowhere-zero, we still could remove
an edge with 0 flow, and if $G'$ was not cubic, we could get a non-trivial
cubification.
\end{proof}

The SHP is a natural strengthening of Conjecture~\ref{con:devos_ref} -- we just fix
a particular $(M, B)$-flow $\varphi$ and try to find a $(M', B')$-flow
$\varphi'$ with an extra requirement $\varphi'(e) \in \H(\varphi(e))$.
Observation~\ref{obs:tension_shp} shows that a variation of SHP for tensions holds
in general, so also all planar graphs have SHP due to duality.

With computer aid we found out that SHP does not hold in general.
The smallest counterexample we found is $K_{3,3}$ with a particular $\Z_5$-flow
and the universal mapping (see Figure~\ref{fig:countex} and Definition~\ref{dfn:univmap}
below).
Although SHP does not hold for $K_{3,3}$ in general it still holds for groups
$\Z_3$ and $\Z_4$ because SHP always holds for groups of size at most 4 
(Theorem~\ref{thm:small_groups}). We also tested that SHP holds for Petersen graph
and $\Z_5$ (we did not try larger snarks due to computational complexity).
This motivates our next conjecture:

\begin{figure}[hbt]
  \centering
  \includegraphics[width=.4\textwidth]{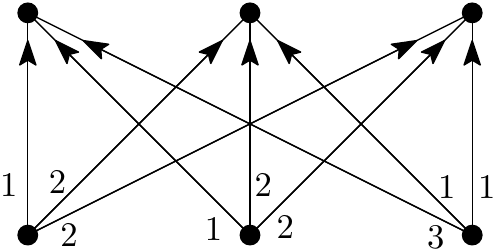}
  \caption{A graph with a $\Z_5$-flow for which SHP does not hold.}
  \label{fig:countex}
\end{figure}

\begin{conjecture}[SHP for minimal groups]
For every graph $G$ the strong homomorphism property holds for group $\Z_k$ where
$k$ is minimal such that $G$ admits a nowhere-zero $\Z_k$-flow.
\end{conjecture}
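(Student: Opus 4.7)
The plan is to reduce to cubic graphs via Observation~\ref{obs:shp_red} and then proceed by case analysis on $k$. Since a putative counterexample to SHP for $\Z_k$ on $G$ yields, after cubification, a cubic counterexample on $G'$ still equipped with a nowhere-zero $\Z_k$-flow, it suffices to verify SHP for $\Z_k$ on every cubic graph whose flow number equals $k$. By Seymour's $6$-flow theorem, $k \in \{3,4,5,6\}$, and the cases $k \in \{3,4\}$ are immediately handled by Theorem~\ref{thm:small_groups} on groups of order at most four. Thus only $k = 5$ (snarks admitting a nowhere-zero $\Z_5$-flow) and $k = 6$ (vacuous under Tutte's $5$-flow conjecture) remain.

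For $k = 5$ I would first enumerate the local structure of a nowhere-zero $\Z_5$-flow at a cubic vertex: three values in $\{1,2,3,4\}$ summing to $0$ modulo $5$, giving a short list of admissible unordered triples. For an arbitrary mapping $m \colon \Z_5 \to M'$, the four sets $\H_m(1), \H_m(2), \H_m(3), \H_m(4)$ are bounded in size by $5$ and satisfy $\H_m(-x) = -\H_m(x)$. The task is then to select, for every edge $e$, a value $\varphi'(e) \in \H_m(\varphi(e))$ so that $\varphi'$ is an $M'$-flow. A promising route is to lift $\varphi$ to a nowhere-zero fractional $5/2$-flow via Tutte's integer-flow theorem~\cite{Tutte2} (as used in the last bullet of Conjecture~\ref{con:devos}), exploit the resulting decomposition into perfect matchings and even subgraphs to guide the construction, and start from the naive candidate $\varphi'(e) = m(\varphi(e)) - m(0)$, correcting it along signed circuits in $G$ to enforce the flow conditions while remaining pointwise inside the $\H_m$-sets.

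The main obstacle is that the minimality hypothesis must be used in an essential way. The counterexample on $K_{3,3}$ shows that SHP for $\Z_5$ genuinely fails on graphs admitting a $\Z_3$-flow, so the proof cannot treat all nowhere-zero $\Z_5$-flows on cubic graphs uniformly. I expect the hard step to be converting the structural constraint ``$G$ is a snark'' (equivalently, $G$ has no nowhere-zero $\Z_4$-flow) into a rigidity property of $\varphi$ that blocks the pathological vertex configurations responsible for the $K_{3,3}$ failure; a swap argument along a cycle double cover --- whose existence for such $G$ is the other main theme of the paper --- seems a plausible vehicle, since the double cover supplies the cycle space needed to correct the naive candidate $\varphi'$.

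For $k = 6$ I would treat the case either conditionally on Tutte's $5$-flow conjecture, in which case it is vacuous, or unconditionally via Seymour's construction of a $\Z_6$-flow as a pair $(\alpha, \beta)$ with $\alpha$ a $\Z_2$-flow and $\beta$ a $\Z_3$-flow. Since SHP holds for $\Z_2$ and $\Z_3$, one could attempt to build $\varphi'$ from the $M'$-images of the two pieces; the subtlety is that a general $m \colon \Z_6 \to M'$ need not respect the product decomposition $\Z_6 \cong \Z_2 \times \Z_3$, so additional gluing would be required to land in $\H_m(\varphi(e))$ on every edge simultaneously.
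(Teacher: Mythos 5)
The statement you are trying to prove is posed in the paper as a conjecture: the paper gives no proof of it, offering only the supporting evidence of Theorem~\ref{thm:small_groups} (SHP for groups of order at most $4$), the computer verification for the Petersen graph with $\Z_5$, and the $K_{3,3}$ counterexample showing that the minimality hypothesis cannot be dropped. Your text is likewise not a proof. The $k=5$ case, which is the heart of the matter, is only an outline (``a promising route'', ``I expect the hard step to be \dots'') with no argument that the naive candidate $\varphi'(e)=m(\varphi(e))-m(0)$ can in fact be corrected along circuits while staying inside every set $\H_m(\varphi(e))$, and no concrete mechanism by which the minimality of $k$ enters; the $k=6$ case is either made conditional on the $5$-flow conjecture or left at the level of an unresolved ``gluing'' of the $\Z_2$- and $\Z_3$-components of Seymour's flow for a mapping $m$ that need not respect the decomposition $\Z_6\cong\Z_2\times\Z_3$, which is precisely the open difficulty.

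Beyond incompleteness, your reduction step contains a genuine error. Observation~\ref{obs:shp_red} converts a failure of SHP for a $\Z_k$-flow on $G$ into a failure for a nowhere-zero $\Z_k$-flow on a smaller cubic graph $G'$, but it does not preserve the hypothesis that $k$ is the minimal flow number: deleting zero-valued edges, suppressing degree-two vertices, deleting bridges and decontracting can produce a $G'$ that admits a nowhere-zero $\Z_3$- or $\Z_4$-flow even though $G$ did not. Hence it does not ``suffice to verify SHP for $\Z_k$ on every cubic graph whose flow number equals $k$''; your reduction only lands you in the class of cubic graphs carrying \emph{some} nowhere-zero $\Z_k$-flow, and for $k=5$ the statement on that larger class is false --- $K_{3,3}$, whose flow number is $3$, together with a suitable $\Z_5$-flow is exactly the paper's counterexample. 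So as set up, your case analysis for $k=5$ would have to establish something known to be false; tracking the minimality hypothesis through the cubification (or finding a different reduction that preserves it) is one of the real obstacles this conjecture presents, not a step that can be delegated to Observation~\ref{obs:shp_red} as stated.
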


It is easy to observe that SHP holds for $m$ which are (induced by) a group
homomorphism but a more general statement is true:

\begin{observation} \label{obs:comp}
Let $G$ be a graph and let $m\colon M \to M'$ be some mapping of abelian groups. Let
$h\colon M' \to M''$ be a group homomorphism. If SHP (resp. Conjecture~\ref{con:devos_ref})
holds for $G$ and $m$ then it also holds for $G$ and $h \circ m$.
\end{observation}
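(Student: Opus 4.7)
The plan is to show that composing with a group homomorphism is essentially transparent from the point of view of the definitions. The key algebraic fact I need is the containment
\[ h(\H_m(x)) \subseteq \H_{h \circ m}(x) \quad \text{for every } x \in M. \]
This follows directly from unfolding the definition: if $y \in \H_m(x)$, then $y = m(a+x) - m(a)$ for some $a \in M$, so because $h$ is a group homomorphism,
\[ h(y) = h(m(a+x)) - h(m(a)) = (h\circ m)(a+x) - (h\circ m)(a) \in \H_{h \circ m}(x). \]

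For the SHP part I proceed as follows. Let $\varphi$ be an $M$-flow on $G$. Because SHP holds for $G$ and $m$, there exists an $M'$-flow $\varphi'$ with $\varphi'(e) \in \H_m(\varphi(e))$ for every edge $e$. Set $\varphi'' := h \circ \varphi'$. Since $h$ is a group homomorphism, summing the flow relation at any vertex and applying $h$ preserves it, so $\varphi''$ is an $M''$-flow. By the containment above, $\varphi''(e) = h(\varphi'(e)) \in h(\H_m(\varphi(e))) \subseteq \H_{h \circ m}(\varphi(e))$, which is exactly what is required for SHP to hold for $G$ and $h \circ m$.

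For Conjecture~\ref{con:devos_ref}, the argument is formally identical. Starting from an $(M,B)$-flow $\varphi$ on $G$, the assumption yields an $(M', \bigcup_{x\in B}\H_m(x))$-flow $\varphi'$; then $h \circ \varphi'$ is an $M''$-flow whose values lie in $h\bigl(\bigcup_{x \in B}\H_m(x)\bigr) = \bigcup_{x \in B} h(\H_m(x)) \subseteq \bigcup_{x \in B} \H_{h \circ m}(x)$, which is the desired conclusion.

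There is no real obstacle here; the only thing to be careful about is that $h$ being a \emph{group} homomorphism (as opposed to just a mapping) is used twice and in essential ways, namely to ensure that $h \circ \varphi'$ remains a flow and to turn a difference inside $\H_m$ into a difference inside $\H_{h \circ m}$. If $h$ were merely a mapping, neither step would go through.
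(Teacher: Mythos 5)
Your proof is correct and follows essentially the same route as the paper: compose the guaranteed flow with $h$, use that $h$ is a group homomorphism to see the composition is still a flow, and unfold the definition of $\H$ to get $h(\H_m(x)) \subseteq \H_{h\circ m}(x)$. The only cosmetic difference is that the paper phrases the last step via $\H_h(x)=\set{h(x)}$ and writes it as an equality of sets, while you state the (sufficient) containment and also spell out the Conjecture~\ref{con:devos_ref} case explicitly.
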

\begin{proof}
Let $\varphi'$ be an $M'$-flow guaranteed by SHP. For a group homomorphism $h$ holds
$\H_h(x) = \set{ h(x) }$. Then $\varphi'' = h \circ \varphi'$
is also an $M''$-flow and its values satisfy
\begin{gather*}
\varphi''(e) \in \H_h[ \H_m(\varphi(e)) ] =
\set{ h(m(a - x) - m(a)) : a \in M } = \\  =
\set{ h(m(a - x)) - h(m(a)) : a \in M } = \H_{h \circ m}(x) . \qedhere
\end{gather*}
\end{proof}

\section{Universal objects} 

Observation~\ref{obs:comp} leads us to the definition of a universal mapping such that if SHP
(or Conjecture~\ref{con:devos_ref}) holds for this mapping, it also holds for
every other mapping.

\begin{definition}[Universal mapping] \label{dfn:univmap}
Let $M$ be an abelian\penalty-0{} group. We define its {\em universal group}
$\mathcal G_M = \Z^{M}$ and its {\em universal mapping}
$\mathcal M_M\colon M \to \mathcal G_M$:
$$ x \longmapsto g_x $$
where $g_x$ is a vector with $1$ on position $x$ and $0$ elsewhere.
\end{definition}

The group
$\Z^{M \setminus \set{0}}$ (with 0 mapped to 0 instead of $g_0$)
would be sufficient but we choose the definition with $\Z^M$ to simplify the proofs.
Note that with this definition $\mathcal G_M$ is just a free group generated by elements of $M$.

\begin{observation}
The universal mapping is universal for both Conjecture~\ref{con:devos_ref} and
SHP, i.\,e., if for a given graph (and flow) Conjecture~\ref{con:devos_ref} (resp. SHP)
holds for the universal mapping then it holds for every mapping.
\end{observation}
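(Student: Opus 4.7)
The plan is to exhibit, for any target mapping $m\colon M \to M'$, a group homomorphism $h\colon \mathcal{G}_M \to M'$ such that $m = h \circ \mathcal{M}_M$. Once this factorisation is in place, the statement is immediate from Observation~\ref{obs:comp}: assuming SHP (or Conjecture~\ref{con:devos_ref}) for $G$ with the universal mapping $\mathcal{M}_M$, the observation upgrades it to SHP for $G$ with $h \circ \mathcal{M}_M = m$.

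First I would invoke the universal property of the free abelian group, which the paper has already pointed out right after Definition~\ref{dfn:univmap}: $\mathcal{G}_M = \mathbb{Z}^M$ is freely generated (as an abelian group) by the vectors $\{g_x : x \in M\}$. Therefore the set map $g_x \mapsto m(x)$ extends uniquely to a group homomorphism $h\colon \mathcal{G}_M \to M'$. By construction $h(\mathcal{M}_M(x)) = h(g_x) = m(x)$ for every $x \in M$, so indeed $m = h \circ \mathcal{M}_M$.

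Then I would simply plug this factorisation into Observation~\ref{obs:comp}, taking the role of ``$m$'' in that observation to be $\mathcal{M}_M$ and the role of ``$h$'' to be the $h$ just constructed. The observation yields SHP (respectively Conjecture~\ref{con:devos_ref}) for $G$ and $h \circ \mathcal{M}_M$, which is exactly $m$. This handles an arbitrary mapping $m$, proving universality.

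There is essentially no obstacle: the only thing to notice is that $\mathcal{G}_M$ was defined as $\mathbb{Z}^M$ rather than, say, $\mathbb{Z}^{M\setminus\{0\}}$, which is precisely what guarantees that even the value $m(0)$ can be recovered from $h(g_0)$ and makes the factorisation $m = h \circ \mathcal{M}_M$ hold on all of $M$ without any special treatment of $0$. (The paper remarks that this is why $\mathbb{Z}^M$ was chosen over $\mathbb{Z}^{M \setminus \{0\}}$.) Apart from this small bookkeeping point, the proof is a one-line application of the universal property of free abelian groups combined with Observation~\ref{obs:comp}.
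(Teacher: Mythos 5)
Your proof is correct and follows essentially the same route as the paper: extend $m$ on the generators $g_x$ to a group homomorphism $\mathcal{G}_M \to M'$ (the paper calls it $m_{\rm ext}$, using that $\mathcal{G}_M$ is free on the $g_x$), factor $m$ through the universal mapping, and conclude by Observation~\ref{obs:comp}. The only difference is cosmetic notation for the composition ($m = h \circ \mathcal{M}_M$ versus the paper's $m = \mathcal{M}_M \circ m_{\rm ext}$).
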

\begin{proof}
Let $m\colon M \to M'$ be any mapping.
We can also interpret $m$ as a homomorphism
$m_{\rm ext}\colon \mathcal G_M \to M'$
-- mapping $m$ defines values of generators $e_x$
and hence it can be uniquely extended into mapping on the whole group which is
a homomorphism (here we are using the fact that $k g_x = 0 \Rightarrow k = 0$ for
$x \neq 0$). Moreover $m = \mathcal M_M \circ m_{\rm ext}$ so
Observation~\ref{obs:comp} finishes the proof.
\end{proof}

There also exists a universal object on the left-hand side -- a universal flow
$\mathcal F$ (which is just the flow into a free group generated by edges outside
of some fixed spanning tree) -- but $\mathcal G_{\mathcal F}$ has infinitely many
generators so we have not found any reasonable way to work with it. Also
note that although the universal group is infinite, SHP holds for the universal group $\Z^M$
if and only if it holds for $\Z_k^M$ for any $k > \Delta(G)$.

\section{Partial results} 

In this section we prove SHP for some special cases of the mapping or the group. 

\begin{theorem}[Mappings with one ``hole'']
Strong homomorphism property holds for mappings $m\colon \Z_k \to \Z_l$ defined
by $m(x) = a x \bmod l$ where $a \in \Z$. (Here we interpret elements of $\Z_k$
as integers $0, 1, 2, \dots, k - 1$.)
\end{theorem}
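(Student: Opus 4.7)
The plan is to reduce to an integer-flow problem: Tutte's lifting theorem turns the $\Z_k$-flow $\varphi$ into an integer circulation whose values on every edge lie in a narrow interval, and the admissible integer lifts on each edge turn out to match exactly the elements of $\H_m$ at that value.

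Identifying $\Z_k$ with $\set{0,1,\dots,k-1}$, a straightforward case split on whether $y+x$ wraps around in $\Z_k$ gives, for $x\neq 0$,
$$\H_m(x)=\set{ax \bmod l,\; a(x-k) \bmod l}\quad\text{and}\quad\H_m(0)=\set{0}.$$
So on each edge the target flow $\varphi'$ must take either the value $a\varphi(e)\bmod l$ or the ``wrap-around'' value $a(\varphi(e)-k)\bmod l$ (and must be $0$ when $\varphi(e)=0$).

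I would then invoke the classical result of Tutte~\cite{Tutte2} already used in the introduction: the $\Z_k$-flow $\varphi$ admits an integer lift $\tilde\varphi\colon E\to\Z$ with $\abs{\tilde\varphi(e)}<k$ and $\tilde\varphi(e)\equiv\varphi(e)\pmod k$. For $\varphi(e)=0$ this forces $\tilde\varphi(e)=0$; for $\varphi(e)\in\set{1,\dots,k-1}$ the only two integer representatives of $\varphi(e)$ modulo $k$ lying in $(-k,k)$ are $\varphi(e)$ and $\varphi(e)-k$, so $\tilde\varphi(e)\in\set{\varphi(e),\,\varphi(e)-k}$.

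Finally, set $\varphi'(e):=a\tilde\varphi(e)\bmod l$. This is the image of the integer circulation $\tilde\varphi$ under the group homomorphism $\Z\to\Z_l$, hence a $\Z_l$-flow; combining the two observations above, $\varphi'(e)\in\H_m(\varphi(e))$ on every edge, which is exactly SHP for $m$. I do not expect a serious obstacle: the substance is the clean alignment between Tutte's lifting window $(-k,k)$ and the two-element set $\H_m(x)$ computed in the first step, and everything else is routine bookkeeping.
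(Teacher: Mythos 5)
Your proposal is correct and follows essentially the same route as the paper: lift the $\Z_k$-flow to an integer flow with values in $\{\varphi(e),\varphi(e)-k\}$ via Tutte's theorem, then push forward along the group homomorphism $x\mapsto ax\bmod l$. The paper phrases this as factoring $m$ through $\Z$ and invoking its composition observation, while you verify $\varphi'(e)\in\H_m(\varphi(e))$ by computing $\H_m$ directly, but the substance is identical.
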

\begin{proof}
Note that mapping $m$ is a composition of mappings $m_1\colon \Z_k \to \Z$ defined
by $m_1(x) = x$ and $m_2\colon \Z \to \Z_l$ defined by $m_2(x) = ax \bmod l$, and that 
$m_2$ is a group homomorphism. Hence we only need to show that SHP holds for $m_1$,
the rest follows from Observation~\ref{obs:comp}.

So we need to show that for every $\Z_k$-flow $\varphi$ there exists a $\Z$-flow
$\varphi'$ such that $\varphi'(e) \in \H_{m_1}(\varphi(e)) = \set{ \varphi(e), \varphi(e) - k }$.
This, however, is a well-known result of Tutte~\cite{Tutte2}. 
\end{proof}

\begin{theorem}\label{thm:small_groups}
  Strong homomorphism property holds for groups $\Z_2$, $\Z_3$, $\Z_2^2$, and $\Z_4$.
\end{theorem}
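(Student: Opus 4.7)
The plan is to use Observation~\ref{obs:shp_red} to reduce to the case of a cubic graph $G$ carrying a nowhere-zero $M$-flow $\varphi$, and then to treat the four groups one by one by analysing the local structure of $\varphi$.

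For $M = \Z_2$ the reduction is vacuous: the three edges at a cubic vertex would contribute $1+1+1 \equiv 1 \pmod 2$ to the flow sum, so no cubic graph admits a nowhere-zero $\Z_2$-flow at all. For $M = \Z_3$, after re-orienting the edges of value $2$ so that every edge has value $1$, the flow equation at a cubic vertex forces the three incident edges to be either all outgoing or all incoming, so $G$ is bipartite; K\"onig's edge-colouring theorem then gives a proper $3$-edge-colouring $E = E_1 \sqcup E_2 \sqcup E_3$, and assigning the three telescoping elements of $\H(1)$ one per colour class produces the desired $M'$-flow (the sum at each source is $0$ and at each sink $-0 = 0$).

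For $M = \Z_4$, a parity analysis of the signed-sum condition shows that the value-$2$ edges form a perfect matching $M_2$ and the complementary $2$-factor (after re-orienting value-$3$ edges to value $1$) consists of even cycles whose vertices alternate as sources and sinks. The four elements of $\H(1)$ list as $h_0, h_1, h_2, h_3$ with telescoping sum zero, and a direct check gives $h_i + h_{i+1} = m(i+2) - m(i) \in \H(2)$ for every $i$. On each $2$-factor cycle of length $2k$ I would assign cycle-edge values alternately $h_0, h_1, h_0, h_1, \ldots$; every matching edge is then forced to carry the single constant value $h_0 + h_1 = m(2)$, which lies in $\H(2)$ and is automatically consistent at both of its endpoints.

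The hard case will be $M = \Z_2^2$. The nowhere-zero flow still yields a proper $3$-edge-colouring with matchings $M_a, M_b, M_c$, but $\Z_2^2$ has no cyclic ordering of its nonzero elements and $3$-edge-colourable cubic graphs need not be bipartite (consider $K_4$), so the naive "one value per colour" assignment $v_a + v_b + v_c = 0$ with $v_x \in \H(x)$ fails whenever the three edges at some vertex do not all agree in orientation. My approach would be to work with a per-edge labelling $y\colon E \to \Z_2^2$ and put $\varphi'(e) = m(y_e + \varphi(e)) - m(y_e)$, which automatically lies in $\H(\varphi(e))$; a direct computation then translates the $M'$-flow condition at a vertex into a combinatorial constraint on the labels of the three incident edges, roughly asking that the multiset of "near" labels at the vertex match the multiset of "far" labels at the opposite ends. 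I would establish the existence of a consistent global labelling by propagating along the $2$-factor $M_a \cup M_b$, where the constraint determines the labels up to one bit of freedom per $2$-factor cycle, and then using that remaining freedom to accommodate the constraint imposed by the matching $M_c$. Verifying that this parity system is always solvable, across all $3$-edge-colourable cubic graphs (including the non-bipartite ones), is where I expect the main combinatorial difficulty to lie.
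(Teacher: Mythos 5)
Your reduction via Observation~\ref{obs:shp_red} and your treatment of $\Z_2$ and $\Z_3$ match the paper's proof. The trouble starts with $\Z_4$: the claim that after assigning $h_0,h_1$ alternately along each $2$-factor cycle the matching edges are ``automatically consistent at both endpoints'' is false. With your orientation (all cycle edges carrying value $1$, vertices alternating as sources and sinks), a source vertex needs its matching edge to carry $h_0+h_1$ \emph{into} it, while a sink needs the matching edge to carry $h_0+h_1$ \emph{out of} it; since $h_0+h_1$ has infinite order in the universal group, consistency forces every matching edge to join a source to a sink. That is not guaranteed: take $K_4$ with the nowhere-zero $\Z_4$-flow whose value-$2$ edges are the matching $\set{12,34}$ and whose $2$-factor is the $4$-cycle $1\,3\,2\,4$ with values alternating $1,3$; after your re-orientation the sources are $1,2$ and the sinks are $3,4$, so both matching edges join two sources or two sinks and your assignment has no consistent completion. (A valid assignment for $K_4$ does exist, but it must use all four elements of $\H(1)$ around the cycle, not just the pair $h_0,h_1$ -- so the construction, not just its verification, needs to change.) The second gap is $\Z_2^2$, where by your own account the key step -- solvability of the parity system for the edge labels $y_e$ propagated along the $2$-factor $M_a\cup M_b$ and constrained by $M_c$ -- is left unproven, so that case is a sketch rather than a proof.

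For comparison, the paper handles both $\Z_2^2$ and $\Z_4$ by one explicit construction that sidesteps these consistency issues: from the flow $\varphi$ it builds a $3$-cycle-double-cover $C_1,C_2,C_3$ (for $\Z_4$, $e\in C_i \Leftrightarrow \varphi(e)\neq i$, each oriented as a $\set{0,\pm1}$-flow, with $C_2$ oriented so it never takes the value $-1$), and then writes the desired $\Z^4$-flow as an explicit half-integer linear combination such as $\psi = \frac{-C_1+C_2-C_3}{2}a + \frac{C_1-C_2-C_3}{2}b + \frac{C_1+C_2+C_3}{2}c + \frac{-C_1-C_2+C_3}{2}d$, checking directly that $\psi(e)\in\H(\varphi(e))$. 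You would need either to adopt something of this kind or to supply the missing case analysis (choice of consecutive pairs of $\H(1)$-values per cycle segment for $\Z_4$, and the solvability proof for your label system in $\Z_2^2$) before your argument is complete.
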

\begin{proof}
Due to Observation~\ref{obs:shp_red} we know that minimal counter-example
is a cubic graph $G$ and nowhere-zero flow $\varphi$. We denote
the generators of the right-hand side free group $a$, $b$, $c$, \dots

\begin{itemize}
\item $\Z_2$: The only graph cubic graph with nowhere-zero $\Z_2$-flow
is the empty graph, for which the claim holds.

\item {$\Z_3$:} Let the mapping $m$ be $0 \mapsto a$, $1 \mapsto b$,
and $2 \mapsto c$ so $\H(1) = \set{b - a, c - b, a - c}$.
A cubic graph has nowhere-zero $\Z_3$-flow if and only if
it is bipartite. So we make all edges directed from one partition to the other
and split them into 3 perfect matchings. Observe that either
$\varphi \equiv 1$ or $\varphi \equiv 2$ in which case we flip the orientation
of edges to get the $\varphi \equiv 1$. We assign one of
the following flow values to each matching: $b - a$, $c - b$, $a - c$.

\item {$\Z_2^2$:} Let the mapping $m$ be $00 \mapsto a$, $01 \mapsto b$,
$10 \mapsto c$, and $11 \mapsto d$. Then
\begin{align*}
  \H(01) &= \set{ \pm(a-b), \pm(c-d) }, \\
  \H(10) &= \set{ \pm(a-c), \pm(b-d) }, \\
  \H(11) &= \set{ \pm(a-d), \pm(b-c) }.
\end{align*}
Let $C_1, C_2, C_3\colon E \to \set{0,\pm 1}$ be a 3-CDC of $G$ defined
(here we slightly abuse notation and define $e \in C \Leftrightarrow
C(e) \neq 0$):
\begin{align*}
  e \in C_1 &\Leftrightarrow \varphi(e) \neq 01 \\
  e \in C_2 &\Leftrightarrow \varphi(e) \neq 10 \\
  e \in C_3 &\Leftrightarrow \varphi(e) \neq 11
\end{align*}
And we define $\psi\colon E \to \Z^4$. Recall that $a = (1,0,0,0) \in \Z^4$, and $b$, $c$, $d$ are defined similarly.
$$ \psi =
  \frac{C_1\! + C_2\! + C_3}{2}a +
  \frac{C_1\! - C_2\! - C_3}{2}b +
  \frac{-C_1\! + C_2\! - C_3}{2}c +
  \frac{-C_1\! - C_2\! + C_3}{2}d
$$
It is easy to check that $\psi$ is a $\Z^4$-flow and $\psi(e) \in \H(\varphi(e))$.

\item {$\Z_4$:} We observe that every vertex (with all incident edges in
same direction) has either values $2, 1, 1$ or $2, 3, 3$. Hence edges with value
2 are a perfect matching. When we remove them we obtain disjoint union of circuits
and we modify orientation of remaining edges so they are directed along circuits.
With this orientation values around every vertex are $1,2,3$ so both
edges with value 1 and edges with value 3 are a perfect matching.

Let $m$ be $0 \mapsto a$, $1 \mapsto b$, $2 \mapsto c$, and $3 \mapsto d$. Then
\begin{align*}
  \H(1) &= \set{ b-a, c-b, d-c, a-d }, \\
  \H(2) &= \set{ \pm(c-a), \pm(d-b) }, \\
  \H(3) &= \set{ d-a, a-b, b-c, c-d }
\end{align*}
Let $C_1, C_2, C_3\colon E \to \set{0,\pm 1}$ be a 3-CDC of $G$ defined:
\begin{align*}
  e \in C_1 &\Leftrightarrow \varphi(e) \neq 1 \\
  e \in C_2 &\Leftrightarrow \varphi(e) \neq 2 \\
  e \in C_3 &\Leftrightarrow \varphi(e) \neq 3
\end{align*}
Observe that we can choose orientation of $C_2$ such that no edge has value $-1$
in $C_2$.
And we define $\psi\colon E \to \Z^4$. Recall that $a = (1,0,0,0) \in \Z^4$, and $b$, $c$, $d$ are defined similarly.
$$ \psi =
  \frac{-C_1\! + C_2\! - C_3}{2}a +
  \frac{C_1\! - C_2\! - C_3}{2}b +
  \frac{C_1\! + C_2\! + C_3}{2}c +
  \frac{-C_1\! - C_2\! + C_3}{2}d
$$
It is easy to check that $\psi$ is a $\Z^4$-flow and $\psi(e) \in \H(\varphi(e))$.
\qedhere
\end{itemize}
\end{proof}

\section{Connection to CDC}

Flows obtained from SHP or
Conjecture~\ref{con:devos_ref} can be easily transformed into an oriented cycle
double cover. Cycle Double Cover Conjecture (CDC, \cite{sze73,sey80})
is together with 3-flow, 4-flow and 5-flow conjectures
one of the major open questions in the field of group flows.
Moreover if $G$ has SHP for $M$ and a nowhere-zero $M$-flow then
the obtained CDC is orientable and has only $\abs{M}$ cycles.
This increases importance of Conjecture~\ref{con:devos_ref} and of determining for
which graphs and groups does SHP hold.

\begin{theorem}[Universal group and CDC]
Let $M$ be any abelian group.
If a graph $G$ has a flow in $\mathcal G_M$ using only
values $\bigcup_{x \in M \setminus \set{0}} \H_{\mathcal M_M}(x)$
then it has an orientable cycle double cover using $\abs{M}$ cycles.
\end{theorem}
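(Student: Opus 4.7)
The plan is to decompose the $\mathcal G_M = \Z^M$-valued flow into its $|M|$ coordinate projections, show that each projection is an oriented Eulerian subgraph (an ``oriented cycle''), and check that together they form a double cover.

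First I would unpack the hypothesis on values. For any $x \in M \setminus \set{0}$, an element of $\H_{\mathcal M_M}(x)$ has the form $g_{a+x} - g_a$ with $a \in M$; since $x \neq 0$ we have $a+x \neq a$, so this vector in $\Z^M$ has exactly one coordinate equal to $+1$ (at position $a+x$), exactly one coordinate equal to $-1$ (at position $a$), and zeros elsewhere. By hypothesis the same is true of $\varphi(e)$ for every edge $e$.

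Next, for each $y \in M$ let $\varphi_y \colon E \to \Z$ be the $y$-th coordinate of $\varphi$. The flow condition is a linear constraint applied coordinate-wise, so each $\varphi_y$ is a $\Z$-flow, and by the previous step $\varphi_y$ takes values only in $\set{-1,0,+1}$. A standard fact is that the support of such a flow, with each edge oriented according to the sign of $\varphi_y$, is an oriented Eulerian subgraph — equivalently, an edge-disjoint union of directed circuits. Call this oriented subgraph $C_y$ (possibly empty).

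Finally, to get the double cover property I would observe: an edge $e$ with $\varphi(e) = g_{a+x} - g_a$ appears with $+1$ only in $\varphi_{a+x}$ and with $-1$ only in $\varphi_a$, and $\varphi_y(e) = 0$ for every other $y$. Hence $e$ lies in exactly two of the $C_y$'s, and these two assign it opposite orientations along $e$. Thus $\set{C_y : y \in M}$ is an orientable cycle double cover with $|M|$ oriented cycles. The only step that is not pure bookkeeping is the classical fact that an integer flow with values in $\set{-1,0,+1}$ has an Eulerian support orientable consistently by its signs; once that is invoked, the result follows immediately by reading off the two nonzero coordinates of each $\varphi(e)$.
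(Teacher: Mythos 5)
Your proof is correct and follows the same route as the paper: project the $\Z^M$-valued flow onto its $|M|$ coordinates, note each projection is a $\set{-1,0,1}$-flow (hence a union of directed circuits), and use the fact that each flow value $g_{a+x}-g_a$ has exactly one $+1$ and one $-1$ coordinate to get the double cover with opposite orientations. You merely spell out the coordinate bookkeeping in a bit more detail than the paper does.
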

\begin{proof}
Denote $H = \bigcup_{x \in M \setminus \set{0}} \H(x)$.
Observe that all elements of $H$ are of form $g_a - g_b$ for some $a,b \in M$ and
those $a,b$ are unique.
Fix an $H$-flow $\varphi$. We define directed cycles
(as mappings $E \to \set{-1, 0, 1}$)
$ \mathcal C_x(a) := (\varphi(a))_x $ and claim that
$C = \set{\mathcal C_x}_{x \in M}$ is
an orientable cycle double cover. From definition $C$ covers each edge twice,
once in each direction, and every $\mathcal C_x$ is a flow with values
$\set{-1,0,1}$ because
it is a composition of a flow and group homomorphism so it is a cycle.
\end{proof}

Seymour in 1981~\cite{sey81} proved that every graph without a bridge
admits nowhere-zero $\Z_6$-flow which combined with the previous theorem for $\Z_6$
gives us the following corollary.

\begin{corollary}
Conjecture~\ref{con:devos} implies that every bridgeless graph has an orientable
cycle double cover with at most 6 cycles.
\end{corollary}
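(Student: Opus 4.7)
The plan is to chain together three ingredients: Seymour's 6-flow theorem~\cite{sey81}, Conjecture~\ref{con:devos} applied to a carefully chosen Cayley-graph homomorphism, and the preceding Universal-group-and-CDC theorem.

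First, I would start from an arbitrary bridgeless graph $G$ and invoke Seymour's theorem to obtain a nowhere-zero $\Z_6$-flow on $G$, i.e., a $(\Z_6, \Z_6 \setminus \set{0})$-flow.

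Next, I would produce the relevant Cayley-graph homomorphism. Take the universal mapping $\mathcal M_{\Z_6} \colon \Z_6 \to \mathcal G_{\Z_6}$ from Definition~\ref{dfn:univmap} and set $B' = \bigcup_{x \in \Z_6 \setminus \set{0}} \H_{\mathcal M_{\Z_6}}(x)$. Directly from the definition of $\H$, for every $x \in \Z_6 \setminus \set{0}$ and every $a \in \Z_6$ we have $\mathcal M_{\Z_6}(a + x) - \mathcal M_{\Z_6}(a) \in \H_{\mathcal M_{\Z_6}}(x) \subseteq B'$, so $\mathcal M_{\Z_6}$ is a graph homomorphism from $\Cay(\Z_6, \Z_6 \setminus \set{0})$ into $\Cay(\mathcal G_{\Z_6}, B')$. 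Applying Conjecture~\ref{con:devos} to this homomorphism converts the nowhere-zero $\Z_6$-flow produced in the previous step into a $(\mathcal G_{\Z_6}, B')$-flow on $G$.

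Finally, I would feed the resulting flow into the preceding Universal-group-and-CDC theorem with $M = \Z_6$, which hands back an orientable cycle double cover of $G$ consisting of exactly $\abs{\Z_6} = 6$ cycles, as required. There is no real obstacle here beyond bookkeeping: the only thing to verify is that $B'$ is precisely the target color set used in the Universal-group-and-CDC theorem and that $\mathcal M_{\Z_6}$ is genuinely a Cayley homomorphism into $\Cay(\mathcal G_{\Z_6}, B')$, both of which are immediate from the definition of $\H$.
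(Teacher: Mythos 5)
Your proposal is correct and is exactly the argument the paper intends: Seymour's 6-flow theorem, the universal mapping viewed as a homomorphism $\Cay(\Z_6,\Z_6\setminus\set{0})\to\Cay(\mathcal G_{\Z_6},B')$ to which Conjecture~\ref{con:devos} is applied, and then the Universal-group-and-CDC theorem with $M=\Z_6$. The verification that $\mathcal M_{\Z_6}$ is a Cayley-graph homomorphism with target set $B'=\bigcup_{x\neq 0}\H_{\mathcal M_{\Z_6}}(x)$ is immediate from the definition of $\H$, just as you say.
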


\bibliographystyle{abbrv}
\bibliography{literatura}

\end{document}